\numberwithin{equation}{section}
\newtheorem{theorem}{Theorem}[section]
\newtheorem{lemma}[theorem]{Lemma}
\newtheorem{proposition}[theorem]{Proposition}
\newtheorem{corollary}[theorem]{Corollary}
\theoremstyle{definition}
\newtheorem{definition}[theorem]{Definition}
\newtheorem{remark}[theorem]{Remark}
\begin{document}

%%%%%%%%%%%%%%%%%%%%%%%%%%%%%%%%%%%%%%%%%%%%%%%%%%%%%%%%%%%%%%%%%%%%%

%\title[Vanishing ideals generated by binomials]{Vanishing ideals
%generated by binomials}  

\title[Binomial vanishing ideals]{Binomial vanishing ideals}

\author{Azucena Tochimani}
\address{
Departamento de
Matem\'aticas\\
Centro de Investigaci\'on y de Estudios
Avanzados del
IPN\\
Apartado Postal
14--740 \\
07000 Mexico City, D.F.
}
\email{tochimani@math.cinvestav.mx}
\thanks{The first author was partially supported by CONACyT. The
second author was partially supported by SNI}

\author{Rafael H. Villarreal}
\address{
Departamento de
Matem\'aticas\\
Centro de Investigaci\'on y de Estudios
Avanzados del
IPN\\
Apartado Postal
14--740 \\
07000 Mexico City, D.F.
}
\email{vila@math.cinvestav.mx}
%\urladdr{http://www.math.cinvestav.mx/$\sim$vila/}

\keywords{Vanishing ideal,binomial ideal,monomial parameterization, 
projective variety}
\subjclass[2000]{Primary 13F20; Secondary 13C05, 14H45.} 

\begin{abstract} 
In this note we characterize, in algebraic and geometric terms, when
a graded vanishing ideal is 
generated by binomials over any field $K$.
\end{abstract}

\maketitle

\section{Introduction}\label{section-intro}

Let $S=K[t_1,\ldots,t_s]$ be a polynomial ring
over a field $K$ with the standard grading induced by setting
$\deg(t_i)=1$  for all $i$. By the {\it dimension\/} of an ideal $I\subset S$ 
we mean the Krull dimension of $S/I$.  The affine and projective spaces
over the field $K$ of dimensions $s$ and $s-1$ are denoted by
$\mathbb{A}^s$ and $\mathbb{P}^{s-1}$, respectively. Points of ${\mathbb P}^{s-1}$ are denoted by $[\alpha]$, where $0\neq \alpha\in
\mathbb{A}^s$. 

Given a set $\mathbb{Y}\subset \mathbb{P}^{s-1}$ define 
$I(\mathbb{Y})$, the {\it vanishing ideal\/} of $\mathbb{Y}$, 
as the graded ideal generated by the homogeneous polynomials 
in $S$ that vanish at all points of $\mathbb{Y}$. Conversely, 
given a homogeneous ideal $I\subset S$ 
define $V(I)$, the {\it zero set\/} of $I$, as the set of all 
$[\alpha]\in \mathbb{P}^{s-1}$ such that $f(\alpha)=0$ for all 
homogeneous polynomial $f\in I$. The zero sets are the closed sets of
the {\it Zariski topology\/} of $\mathbb{P}^{s-1}$. The Zariski closure
of $\mathbb{Y}$ is denoted by $\overline{\mathbb{Y}}$. 

We will use the following multi-index notation: for
$a=(a_1,\ldots,a_s)\in\mathbb{Z}^s$, set $t^a=t_1^{a_1}\cdots
t_s^{a_s}$. We call $t^a$ a {\it Laurent monomial\/}.  
If $a_i\geq 0$ for all $i$, $t^a$ is called a {\it monomial} of $S$. 
A {\it binomial\/} of $S$ is an
element of the form $f=t^a-t^b$, for some 
$a,b$ in $\mathbb{N}^s$. An ideal $I\subset S$ generated by 
binomials is called a {\it binomial ideal\/}. A binomial ideal
$I\subset S$ with the property that $t_i$ is not a zero-divisor 
of $S/I$ for all $i$ is called a {\it lattice ideal\/}. 

In this note we classify binomial vanishing ideals 
in algebraic and geometric terms. There are some reasons to study
vanishing ideals. 
They are used in algebraic geometry \cite{harris} and
algebraic coding theory \cite{GRT,cartesian-codes}. They are also used in 
polynomial interpolation problems
\cite{gasca-sauer,interpolation-finite,vanishing-ideals}. 

The set $\mathcal{S}=\mathbb{P}^{s-1}\cup\{[0]\}$ is a monoid under componentwise
multiplication, that is, given $[\alpha]=[(\alpha_1,\ldots,\alpha_s)]$
and $[\beta]=[(\beta_1,\ldots,\beta_s)]$ in $\mathcal{S}$, the product
operation is given by 
$$
[\alpha]\cdot[\beta]=[\alpha\cdot\beta]=[(\alpha_1\beta_1,\ldots,\alpha_s\beta_s)],
$$
where $[\mathbf{1}]=[(1,\ldots,1)]$ is the identity element.
Accordingly the affine space $\mathbb{A}^s$ is also a monoid under componentwise
multiplication.

The contents of this note are as follows. In 
Section~\ref{section-prelim} we recall some preliminaries 
on projective varieties and vanishing ideals. Let $\mathbb{Y}$ be a subset of
$\mathbb{P}^{s-1}$. If $\mathbb{Y}\cup\{[0]\}$ is a submonoid of
$\mathbb{P}^{s-1}\cup\{[0]\}$, we show that $I(\mathbb{Y})$ is a binomial
ideal (Theorem~\ref{monoid-binomial}). The same type of result
holds if $Y$ is a subset of $\mathbb{A}^s$
(Proposition~\ref{monoid-binomial-affine}). 
Then we show that $I(\mathbb{Y})$ is a
binomial ideal if and only if $V(I(\mathbb{Y}))\cup\{[0]\}$ is a 
monoid under componentwise multiplication
(Theorem~\ref{classification-binomial}). As a result if
$\mathbb{Y}$ is finite, 
then  $I(\mathbb{Y})$ is a binomial ideal if and
only if $\mathbb{Y}\cup\{0\}$ is a monoid
(Corollary~\ref{oct11-14-1}). This essentially classifies all 
graded binomial vanishing ideals of dimension $1$ (Corollary~\ref{oct11-14-2})

If $Y$ is a submonoid of an affine torus (see Definition~\ref{projectivetorus-def}), 
then $I(Y)$ is a non-graded lattice ideal
\cite[Proposition~2.3]{EisStu}. We give a graded version of this
result, namely, if $\mathbb{Y}$ is a submonoid of a projective torus,
then $I(\mathbb{Y})$ is a lattice ideal (Corollary~\ref{oct11-14}).

Let $I(\mathbb{Y})$ be a vanishing ideal of dimension $1$. According
to \cite[Proposition~6.7(a)]{lattice-dim1} 
$I(\mathbb{Y})$ is a 
lattice ideal if and only if $\mathbb{Y}$ is a finite subgroup of a
projective torus. We complement this
result by showing that---over an algebraically closed field---$\mathbb{Y}$ is a finite subgroup of a
projective torus if and only if there is a finite subgroup $H$ of
$K^*=K\setminus\{0\}$ and $v_1,\ldots,v_s\in\mathbb{Z}^n$ 
that parameterize $\mathbb{Y}$
relative to $H$ (Proposition~\ref{lattice-ideal-param-h}). For finite
fields, this result was shown in 
\cite[Proposition~6.7(b)]{lattice-dim1}. 

Finally, we classify the graded lattice ideals of dimension $1$ over 
an algebraically closed field of characteristic zero. It turns out
that they are  the vanishing ideals of finite subgroups of projective
tori (Proposition~\ref{lattice-alg-closed-char=0}).

For all unexplained
terminology and additional information,  we refer to 
\cite{CLO,harris} (for algebraic geometry and vanishing ideals) 
and \cite{EisStu,prim-dec-critical,monalg-rev} (for binomial and lattice ideals).

\section{Preliminaries}\label{section-prelim}
In this section, we 
present a few results that will be needed in this note.
All results of this
section are well-known. 

\begin{definition}
Let $K$ be a field. We define 
the {\it projective space\/} of 
dimension $s-1$
over $K$, denoted by 
$\mathbb{P}_K^{s-1}$ or $\mathbb{P}^{s-1}$ if $K$ is understood, to be the quotient space 
$$(K^{s}\setminus\{0\})/\sim $$
where two points $\alpha$, $\beta$ in $K^{s}\setminus\{0\}$ 
are equivalent under $\sim$ if $\alpha=c\beta$ for some $c\in K$. It is usual to denote the 
equivalence class of $\alpha$ by $[\alpha]$. The {\it affine space\/}
of dimension $s$ over the field $K$, denoted $\mathbb{A}_K^s$ or
$\mathbb{A}^s$, is $K^s$.
\end{definition}

For any set $\mathbb{Y}\subset \mathbb{P}^{s-1}$ define 
$I(\mathbb{Y})$, the {\it vanishing ideal\/} of $\mathbb{Y}$, 
as the ideal generated by the homogeneous polynomials 
in $S$ that vanish at all points of $\mathbb{Y}$. Conversely, 
given a homogeneous ideal $I\subset S$ 
define its {\it zero set\/} as  
$$V(I)=\left\{[\alpha]\in \mathbb{P}^{s-1}\vert\, 
f(\alpha)=0,\, 
\forall f\in I\, \mbox{ homogeneous} \right\}.
$$ 

A {\it projective variety\/} is the zero set of a 
homogeneous ideal. It is not difficult to see that the members 
of the family 
$$\tau=\{\mathbb{P}^{s-1}\setminus 
V(I)\, \vert\, I\mbox{ is a graded ideal of }S\}$$
are the open sets of a topology on $\mathbb{P}^{s-1}$, called the 
{\it Zariski topology\/}. In a similar way we can define affine
varieties, vanishing ideals of subsets of the affine space $\mathbb{A}^s$, and the
corresponding Zariski topology of $\mathbb{A}^s$. The Zariski closure
of $\mathbb{Y}$ is denoted by $\overline{\mathbb{Y}}$.

\begin{lemma}\label{mar31-14}
Let $K$ be a field. 

{\rm(a)} \cite[pp.~191--192]{CLO} If $Y\subset\mathbb{A}^s$ and
$\mathbb{Y}\subset\mathbb{P}^{s-1}$,  
then $\overline{Y}=V(I(Y))$ and
$\overline{\mathbb{Y}}=V(I(\mathbb{Y}))$. 

{\rm(b)} If $K$ is a finite field, then $Y=V(I(Y))$ and
$\mathbb{Y}=V(I(\mathbb{Y}))$. 
\end{lemma}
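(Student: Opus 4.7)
The plan is to treat the two parts independently, since part (a) is the standard identification of Zariski closure with the zero set of the vanishing ideal, and part (b) is a finiteness argument.

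For part (a), I would follow the usual two-inclusion argument. The easy direction is $\overline{Y}\subseteq V(I(Y))$: by definition every $f\in I(Y)$ vanishes on $Y$, so $Y\subseteq V(I(Y))$, and since $V(I(Y))$ is Zariski closed, the closure is contained in it. For the reverse inclusion, let $C$ be any closed set containing $Y$. Then $C=V(J)$ for some ideal $J$, and since every $f\in J$ vanishes on $Y\subseteq V(J)$, we get $J\subseteq I(Y)$, hence $V(I(Y))\subseteq V(J)=C$. Taking the intersection over all closed $C\supseteq Y$ yields $V(I(Y))\subseteq\overline{Y}$. The argument for $\mathbb{Y}\subset\mathbb{P}^{s-1}$ is identical, using graded ideals and homogeneous polynomials throughout. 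Since this is the content cited to \cite{CLO}, I would state it briefly and refer to that source.

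For part (b), the plan is to show that when $K$ is finite, every subset of $\mathbb{A}^s$ and $\mathbb{P}^{s-1}$ is closed; combined with part (a) this gives $Y=\overline{Y}=V(I(Y))$ and likewise for $\mathbb{Y}$. Since $K$ is finite, both $\mathbb{A}^s$ and $\mathbb{P}^{s-1}$ are finite, so it suffices to check that singletons are closed (and take finite unions). An affine point $\alpha=(\alpha_1,\ldots,\alpha_s)$ is the zero set of the ideal $(t_1-\alpha_1,\ldots,t_s-\alpha_s)$. For a projective point $[\alpha]$, pick an index $j$ with $\alpha_j\neq 0$; I would verify that the homogeneous ideal generated by $\{\alpha_j t_i-\alpha_i t_j:i\neq j\}$ has zero set exactly $\{[\alpha]\}$, by observing that any $\beta\neq 0$ satisfying these equations must have $\beta_j\neq 0$ (else all $\beta_i=0$) and then $\beta=(\beta_j/\alpha_j)\alpha$.

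The only step that requires a small verification rather than a pure citation is the closedness of a projective point, and that is the mild calculation indicated above; there is no genuine obstacle in this lemma.
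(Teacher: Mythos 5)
Your proposal is correct and follows the same route as the paper: part (a) is the standard result cited to \cite{CLO}, and part (b) is deduced from (a) by noting that over a finite field every subset of $\mathbb{A}^s$ or $\mathbb{P}^{s-1}$ is Zariski closed, so $\overline{Y}=Y$ and $\overline{\mathbb{Y}}=\mathbb{Y}$. The only difference is that you spell out why singletons are closed (which the paper leaves implicit, though its Lemma~\ref{primdec-ixx} records the same vanishing ideals of points), and your verification there is correct.
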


\begin{proof} Part (b) follows from (a) because
$\overline{Y}=Y$ and
$\overline{\mathbb{Y}}=\mathbb{Y}$, if $K$ is finite.
 \end{proof}

\begin{lemma}\label{jun3-14} Let $K$ be a field. If $Y$ is a subset of
$\mathbb{A}^s$ or a subset of $\mathbb{P}^{s-1}$ and $Z=V(I(Y))$,
then $I(Z)=I(Y)$. In particular $I(Y)=I(\overline{Y})$.
\end{lemma}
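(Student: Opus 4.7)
The plan is to reduce the statement to the identity $I(Y)=I(\overline{Y})$ via Lemma~\ref{mar31-14}(a), and then prove that identity by a standard double inclusion using only the topological definition of the Zariski closure.

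First I would invoke Lemma~\ref{mar31-14}(a) to rewrite $Z = V(I(Y)) = \overline{Y}$, in both the affine and projective cases, so that the claim $I(Z)=I(Y)$ becomes the assertion $I(Y) = I(\overline{Y})$. The ``in particular'' clause is then immediate.

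Next I would establish the two inclusions. For $I(\overline{Y})\subseteq I(Y)$, I simply use $Y\subseteq\overline{Y}$: any (homogeneous) polynomial that vanishes on $\overline{Y}$ vanishes on $Y$. For the reverse inclusion $I(Y)\subseteq I(\overline{Y})$, I take a homogeneous generator $f\in I(Y)$ and observe that $V(f)$ is Zariski closed (it is the zero set of the graded ideal $(f)$, or of $f$ itself in the affine case) and contains $Y$. Since $\overline{Y}$ is the smallest closed set containing $Y$, one has $\overline{Y}\subseteq V(f)$, so $f$ vanishes on $\overline{Y}$ and thus $f\in I(\overline{Y})$. As $I(\overline{Y})$ is an ideal and $I(Y)$ is generated by such homogeneous $f$'s (respectively by all $f$'s vanishing on $Y$ in the affine case), this gives $I(Y)\subseteq I(\overline{Y})$.

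There is essentially no obstacle here: the argument is a direct consequence of the fact that $\overline{Y}$ is the intersection of all closed sets containing $Y$ together with the (already recalled) fact that each $V(f)$ is closed in the Zariski topology. The only mild subtlety, in the projective setting, is to phrase the argument in terms of homogeneous generators so that it remains internal to the graded category; this is handled by the remark above.
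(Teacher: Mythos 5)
Your proof is correct, but it routes the argument differently from the paper. The paper never passes through the closure for the main claim: it proves $I(Z)=I(Y)$ by the purely formal Galois-connection properties of the pair $(I,V)$ --- namely $Y\subset Z$ forces $I(Z)\subset I(Y)$, while $I(Z)=I(V(I(Y)))\supset I(Y)$ holds tautologically, since every homogeneous $f\in I(Y)$ vanishes at every point of $V(I(Y))$ by the very definition of $V$. The identity $I(Y)=I(\overline{Y})$ is then the afterthought, obtained by substituting $\overline{Y}=V(I(Y))$ from Lemma~\ref{mar31-14}(a). You invert this: you first invoke Lemma~\ref{mar31-14}(a) to identify $Z$ with $\overline{Y}$, and then prove $I(Y)=I(\overline{Y})$ by the topological argument that each $V(f)$, $f$ a homogeneous element vanishing on $Y$, is a closed set containing $Y$ and hence containing $\overline{Y}$. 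Both are sound; the paper's version is slightly more economical and does not depend on Lemma~\ref{mar31-14}(a) for the main equality (only for the ``in particular'' clause), whereas yours makes the geometric content --- that vanishing loci are closed and cannot distinguish a set from its closure --- explicit. Your attention to phrasing the projective case in terms of homogeneous generators is the right precaution, and matches the convention the paper uses in defining $I(\mathbb{Y})$ and $V(I)$.
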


\begin{proof} Since $Y\subset Z$, we get 
$I(Z)\subset I(Y)$. As $I(Z)=I(V(I(Y)))\supset I(Y)$,
one has equality.
\end{proof}

\begin{lemma}{\cite[Proposition~6, p.~441]{CLO}}\label{dim=1-finite} 
If $\mathbb{Y}\subset\mathbb{P}^{s-1}$ and
$\dim(S/I(\mathbb{Y}))=1$, then $|\mathbb{Y}|<\infty$. 
\end{lemma}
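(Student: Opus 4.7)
Since $\mathbb{Y}\subset V(I(\mathbb{Y}))$ by definition, it suffices to show that $V(I(\mathbb{Y}))$ is finite. The plan is to decompose $I(\mathbb{Y})$ into minimal primes and prove that each contributes at most one point to the zero set.

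As $I(\mathbb{Y})$ is a graded radical ideal in the Noetherian ring $S$, I would write
\[
I(\mathbb{Y})=P_1\cap\cdots\cap P_r,
\]
where $P_1,\ldots,P_r$ are the (finitely many, graded) minimal primes of $I(\mathbb{Y})$. The hypothesis $\dim(S/I(\mathbb{Y}))=1$ forces $\dim(S/P_i)=1$, so each $P_i$ has height $s-1$. Hence $V(I(\mathbb{Y}))=V(P_1)\cup\cdots\cup V(P_r)$, and it is enough to prove $|V(P_i)|\le 1$ for every $i$. Given $[\alpha]\in V(P_i)$, I would scale the representative so that $\alpha_1=1$ and set
\[
J_\alpha := (t_2-\alpha_2 t_1,\ldots,t_s-\alpha_s t_1)\subset S.
\]
Reducing any homogeneous $f$ modulo these generators sends $f$ to $f(\alpha)\,t_1^{\deg f}$, so $J_\alpha = I(\{[\alpha]\})$ and $S/J_\alpha\cong K[t_1]$; in particular $J_\alpha$ is a graded prime of height $s-1$. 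Since $[\alpha]\in V(P_i)$ gives $P_i\subset J_\alpha$, and both are primes of the same height in the Noetherian ring $S$, one has $P_i=J_\alpha$. Distinct projective points have distinct vanishing ideals (they are separated by a linear form), so $[\alpha]$ is pinned down by $P_i$, and therefore $V(P_i)=\{[\alpha]\}$.

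The main obstacle is the identification $J_\alpha = I(\{[\alpha]\})$ together with the resulting height computation: these convert the geometric statement that a projective point is determined by its vanishing ideal into a purely ring-theoretic comparison of primes of equal height, and they are what make the argument go through uniformly over any field $K$.
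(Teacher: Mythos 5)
Your proof is correct, but it takes a genuinely different route from the paper, which in fact offers no argument for this lemma at all and simply cites \cite[Proposition~6, p.~441]{CLO}, where the result is obtained via Hilbert functions. Your argument is instead a self-contained primary-decomposition proof: decompose the radical graded ideal $I(\mathbb{Y})$ into minimal primes $P_i$ of height $s-1$ and show that each $V(P_i)$ has at most one point by comparing $P_i$ with the height-$(s-1)$ prime $I(\{[\alpha]\})$. This is essentially a converse use of the paper's Lemma~\ref{primdec-ixx}, and it has the virtue of working transparently over an arbitrary field $K$ (some $V(P_i)$ may be empty when $K$ is not algebraically closed, which your bound $|V(P_i)|\le 1$ silently and correctly accommodates). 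Two small points would tighten it. First, you cannot always normalize $\alpha_1=1$; instead pick $k$ with $\alpha_k\neq 0$ and run the symmetric argument with $J_\alpha=(\{\alpha_k t_i-\alpha_i t_k\}_{i\neq k})$, exactly the ideal appearing in Lemma~\ref{primdec-ixx}. Second, the assertion that $\dim(S/I(\mathbb{Y}))=1$ forces $\dim(S/P_i)=1$ for \emph{every} minimal prime deserves a word: in general one only gets $\dim(S/P_i)\le 1$, i.e., $\operatorname{ht}(P_i)\ge s-1$. The only graded prime of dimension $0$ is the irrelevant ideal $(t_1,\ldots,t_s)$, and it cannot occur as a minimal prime here (being comparable with every graded prime, it would have to be the unique minimal prime, giving dimension $0$); alternatively, your height comparison already disposes of any $P_i$ of height $s$, since such a prime cannot be contained in the height-$(s-1)$ ideal $J_\alpha$ and hence has empty zero set. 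With these adjustments the argument is complete and is a perfectly good elementary substitute for the citation.
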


%\begin{theorem}{\rm(The Dimension Theorem \cite[p.~434]{CLO})} Let
%$K$ be an 
%algebraically closed field. 
%If $X=V(I)$ is a projective variety $($resp. affine variety$)$, then 
%$\dim(X)=\dim(S/I)-1$ $($resp. $\dim(X)=\dim(S/I)$$)$.
%\end{theorem}

The converse of Lemma~\ref{dim=1-finite} is true. This follows from
the next result.  

\begin{lemma}\label{primdec-ixx} Let $\mathbb{Y}$ and $Y$ be finite
subsets of $\mathbb{P}^{s-1}$ and $\mathbb{A}^s$ respectively, 
let $P$ and $[P]$ be points in $\mathbb{Y}$ and $Y$, respectively, 
with $P=(\alpha_1,\ldots,\alpha_s)$, and let $I_{[P]}$ and $I_P$ be the vanishing ideal of $[P]$ and $P$,
respectively. Then
\begin{equation}%\label{primdec-ix}
I_{[P]}=(\left\{\alpha_kt_i-\alpha_it_k\vert\, k\neq
i\in\{1,\ldots,s\}\right\}),\ \,
I_{P}=(t_1-\alpha_1,\ldots,t_s-\alpha_s),
\end{equation}
where $\alpha_k\neq 0$ for some $k$. Furthermore
$I(\mathbb{Y})=\bigcap_{[Q]\in \mathbb{Y}}I_{[Q]}$, 
$I(Y)=\bigcap_{Q\in Y}I_{Q}$, $I_{[P]}$ is a prime ideal of height
$s-1$ and $I_P$ is a prime ideal of height $s$.  
\end{lemma}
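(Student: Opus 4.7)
The plan is to establish the three parts of the lemma in the order they appear: explicit generators, primeness/height, and intersection decomposition. I would handle the affine case first, as it serves as a model for the projective one.

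For $I_P$ with $P=(\alpha_1,\ldots,\alpha_s)$, set $J=(t_1-\alpha_1,\ldots,t_s-\alpha_s)$. Each generator vanishes at $P$, so $J\subseteq I_P$. Reducing modulo $J$ replaces every $t_i$ by $\alpha_i$, which gives an isomorphism $S/J\xrightarrow{\sim}K$ via evaluation at $P$; hence $J$ is maximal, and from $J\subseteq I_P\subsetneq S$ I obtain $J=I_P$. This realizes $I_P$ as a maximal ideal of the polynomial ring $S$, so it is prime of height $s=\dim S$.

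For $I_{[P]}$, after rescaling the representative of $[P]$ I may assume $\alpha_k=1$ (the ideal depends only on the projective class). Let $J$ be the ideal generated by the linear forms $t_i-\alpha_it_k$ for $i\neq k$; these are homogeneous and vanish at $[P]$, so $J\subseteq I_{[P]}$. To get equality I would introduce the $K$-algebra homomorphism $\varphi\colon S\to K[t_k]$ given by $t_i\mapsto\alpha_it_k$ (so $t_k\mapsto t_k$); then $\varphi(J)=0$, and modulo $J$ every polynomial reduces to one in $t_k$ alone via the relations $t_i\equiv\alpha_it_k$, so the induced map $S/J\to K[t_k]$ is an isomorphism. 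In particular $J$ is prime of height $s-1$. Now for a homogeneous $f\in I_{[P]}$ of degree $d$,
$$
\varphi(f)=f(\alpha_1t_k,\ldots,\alpha_st_k)=t_k^{\,d}\,f(\alpha_1,\ldots,\alpha_s)=0,
$$
so $f\in\ker\varphi=J$. Since $I_{[P]}$ is graded this forces $I_{[P]}\subseteq J$, hence equality.

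The intersection formulas are then routine: in the affine case a polynomial vanishes on $Y$ iff it vanishes at each $Q\in Y$, giving $I(Y)=\bigcap_{Q\in Y}I_Q$ directly; in the projective case both $I(\mathbb{Y})$ and each $I_{[Q]}$ are graded, so membership may be tested on homogeneous components, and a homogeneous $f$ lies in $\bigcap_{[Q]\in\mathbb{Y}}I_{[Q]}$ exactly when it vanishes at every point of $\mathbb{Y}$, which is by definition membership in $I(\mathbb{Y})$. The only mildly delicate point in the whole argument is the projective presentation: one must legitimize the normalization $\alpha_k=1$ and verify that the substitution relations $t_i\equiv\alpha_it_k$ really do present $S/J$ as $K[t_k]$. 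Once that isomorphism is in hand, primeness, the height computation, and the kernel calculation all follow at once.
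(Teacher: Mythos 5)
Your argument is correct. Note, however, that the paper itself gives no proof of this lemma: it appears in the preliminaries section, which the authors explicitly flag as consisting of well-known results, so there is nothing to compare against line by line. Your write-up is the standard argument one would expect. Two small points worth making explicit if you polish it: (i) the normalization $\alpha_k=1$ is harmless not only because $I_{[P]}$ depends only on the class $[P]$, but also because the fixed-$k$ generators $t_i-\alpha_i t_k$ already generate all of the $\alpha_j t_i-\alpha_i t_j$ via the identity $\alpha_j t_i-\alpha_i t_j=\alpha_j(t_i-\alpha_i t_k)-\alpha_i(t_j-\alpha_j t_k)$, so your $J$ really is the ideal displayed in the statement; and (ii) the step $I_{[P]}\subseteq J$ uses that $I_{[P]}$ is by definition generated by homogeneous polynomials vanishing at $[P]$, which is exactly why checking homogeneous $f$ suffices. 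The isomorphisms $S/I_P\cong K$ and $S/J\cong K[t_k]$ give primeness and, since $S$ is Cohen--Macaulay, the stated heights $s$ and $s-1$. The intersection formulas are indeed immediate from the definitions.
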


\section{A classification of vanishing ideals generated by binomials}
We continue to employ the notations and 
definitions used in Sections~\ref{section-intro} and
\ref{section-prelim}. In this part we classify vanishing ideals
generated by binomials. 

Let $(\mathcal{S},\,\cdot\,,1)$ be a monoid and let $K$ be a field. As usual we
define a {\it character\/} $\chi$ of $\mathcal{S}$ in $K$ 
(or a $K$-{\it character} of $\mathcal{S}$) to be a homomorphism of
$\mathcal{S}$ into the multiplicative monoid $(K,\,\cdot,\,1)$. Thus
$\chi$ is a map of $\mathcal{S}$ into $K$ such that $\chi(1)=1$ and
$\chi(\alpha\beta)=\chi(\alpha)\chi(\beta)$ for all $\alpha,\beta$ in
$\mathcal{S}$.

\begin{theorem}{\rm(Dedekind's Theorem \cite[p.~291]{JacI})}\label{dedekind-independence}
If $\chi_1,\ldots,\chi_m$ are distinct characters of a monoid
$\mathcal{S}$ 
into a field $K$, then the only elements $\lambda_1,\ldots,\lambda_m$
in $K$ such that 
$$
\lambda_1\chi_1(\alpha)+\cdots+\lambda_m\chi_m(\alpha)=0
$$
for all $\alpha\in\mathcal{S}$ are $\lambda_1=\cdots=\lambda_m=0$.
\end{theorem}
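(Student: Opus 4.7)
The plan is to prove the theorem by induction on $m$, using the standard ``shortest dependence'' argument. The base case $m=1$ is immediate: if $\lambda_1\chi_1(\alpha)=0$ for all $\alpha\in\mathcal{S}$, then taking $\alpha=1$ gives $\lambda_1\chi_1(1)=\lambda_1\cdot 1=\lambda_1=0$, since by definition every character satisfies $\chi(1)=1$.

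For the inductive step, I would argue by contradiction. Suppose the conclusion fails for some $m\geq 2$, and choose a nontrivial relation $\sum_{i=1}^m\lambda_i\chi_i(\alpha)=0$ (valid for all $\alpha\in\mathcal{S}$) of minimal support, i.e., with the smallest possible number of nonzero coefficients. By relabeling, I may assume every $\lambda_i$ is nonzero (else a shorter relation exists, contradicting minimality or the inductive hypothesis applied to the characters with nonzero coefficients). Since $\chi_1\neq \chi_m$, there exists $\beta\in\mathcal{S}$ with $\chi_1(\beta)\neq \chi_m(\beta)$. Substituting $\alpha\beta$ into the relation and using multiplicativity yields
\[
\sum_{i=1}^m\lambda_i\chi_i(\beta)\chi_i(\alpha)=0\quad\text{for all }\alpha\in\mathcal{S}.
\]
On the other hand, multiplying the original relation by $\chi_m(\beta)$ gives $\sum_{i=1}^m\lambda_i\chi_m(\beta)\chi_i(\alpha)=0$. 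Subtracting, the $\chi_m$ term cancels and I obtain
\[
\sum_{i=1}^{m-1}\lambda_i\bigl(\chi_i(\beta)-\chi_m(\beta)\bigr)\chi_i(\alpha)=0\quad\text{for all }\alpha\in\mathcal{S}.
\]
This is a relation among $\chi_1,\ldots,\chi_{m-1}$, and the coefficient of $\chi_1$ equals $\lambda_1(\chi_1(\beta)-\chi_m(\beta))\neq 0$ by the choice of $\beta$ and the fact that $\lambda_1\neq 0$.

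Thus I have produced a nontrivial relation of length at most $m-1$, contradicting either the inductive hypothesis or the minimality of the original support. Hence no nontrivial relation exists, completing the proof. The only real content lies in the choice of $\beta$ separating $\chi_1$ from $\chi_m$, which is what makes the multiplicative structure of $\mathcal{S}$ essential; beyond that the argument is a routine cancellation and I do not anticipate any serious obstacle.
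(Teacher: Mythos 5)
Your proof is correct. Note that the paper does not prove this statement at all---it quotes it as Dedekind's Theorem with a citation to Jacobson's \emph{Basic Algebra I}---and your argument (induction on $m$ via a minimal nontrivial relation, choosing $\beta$ with $\chi_1(\beta)\neq\chi_m(\beta)$, substituting $\alpha\beta$ and subtracting $\chi_m(\beta)$ times the original relation to cancel the last term) is precisely the standard textbook proof found there; it uses only $\chi(1)=1$ and multiplicativity, so it applies verbatim to monoid characters even though they may take the value $0$.
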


\begin{theorem}\label{monoid-binomial} If $\mathbb{Y}$ is a subset of
$\mathbb{P}^{s-1}$ and $\mathbb{Y}\cup\{[0]\}$ is a submonoid of
$\mathbb{P}^{s-1}\cup\{[0]\}$ under componentwise multiplication, then $I(\mathbb{Y})$ is a binomial
ideal.  
\end{theorem}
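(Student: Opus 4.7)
The plan is to apply Dedekind's independence theorem (Theorem~\ref{dedekind-independence}), but its natural setting is characters of a monoid, and monomials $t^a$ do not define well-posed functions on $\mathbb{P}^{s-1}$. To get around this, I would pass to the affine cone $Y=\{\alpha\in K^s : [\alpha]\in\mathbb{Y}\}\cup\{0\}$. Because $\mathbb{Y}\cup\{[0]\}$ contains the identity $[(1,\ldots,1)]$ and is closed under componentwise multiplication, $Y$ is a submonoid of $(\mathbb{A}^s,\,\cdot\,)$ with identity $(1,\ldots,1)$. Every monomial $t^a$ of a fixed degree $d$ then evaluates to a genuine $K$-character $\chi_a\colon Y\to K$, $\alpha\mapsto\alpha^a$.

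Given a homogeneous $f=\sum_{i=1}^m \lambda_i t^{a_i}\in I(\mathbb{Y})$ of degree $d$, with the $t^{a_i}$ distinct and $\lambda_i\in K\setminus\{0\}$, I would observe that $f$ vanishes on all of $Y$: on $0$ because $d>0$ (the $d=0$ case forcing $f=0$ since $\mathbb{Y}\neq\emptyset$), and on nonzero $\alpha\in Y$ because $f$ is homogeneous and $[\alpha]\in\mathbb{Y}$. Next, declare $i\sim j$ whenever $t^{a_i}-t^{a_j}\in I(\mathbb{Y})$. Homogeneity ensures this is equivalent to $\chi_{a_i}=\chi_{a_j}$ as characters of $Y$. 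Partition the indices into classes $G_1,\ldots,G_r$ with representatives $i_k\in G_k$ and set $c_k=\sum_{j\in G_k}\lambda_j$. Writing
$$\sum_{j\in G_k}\lambda_j t^{a_j}=\sum_{\substack{j\in G_k\\ j\neq i_k}}\lambda_j\bigl(t^{a_j}-t^{a_{i_k}}\bigr)+c_k\,t^{a_{i_k}},$$
we see that $f$ is congruent, modulo binomials lying in $I(\mathbb{Y})$, to $g:=\sum_{k=1}^r c_k\,t^{a_{i_k}}$.

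The crux is then to show $g=0$. Since $g\in I(\mathbb{Y})$ is homogeneous, it vanishes on $Y$, so
$$\sum_{k=1}^r c_k\,\chi_{a_{i_k}}(\alpha)=0\quad\text{for every }\alpha\in Y,$$
and the characters $\chi_{a_{i_1}},\ldots,\chi_{a_{i_r}}$ are pairwise distinct on $Y$ by construction. Dedekind's theorem forces $c_k=0$ for every $k$, hence $g=0$ and $f$ lies in the ideal generated by the binomials in $I(\mathbb{Y})$.

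The step I expect to require the most care is checking that the equivalence $t^{a_i}-t^{a_j}\in I(\mathbb{Y})$ genuinely matches equality of the characters $\chi_{a_i},\chi_{a_j}$ on the affine monoid $Y$: this forces one to exploit both the common degree of the $a_i$ (so that rescaling a representative $\alpha\mapsto c\alpha$ multiplies every $\alpha^{a_i}$ by the same factor $c^d$, making vanishing of a homogeneous binomial on $\mathbb{Y}$ well-defined and equivalent to its vanishing on the cone) and the description $Y\setminus\{0\}=\pi^{-1}(\mathbb{Y})$. Everything else---closure of $Y$ under multiplication, existence of the identity $(1,\ldots,1)$, and the rewriting modulo binomials---is routine book-keeping.
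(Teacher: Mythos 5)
Your proposal is correct and follows essentially the same route as the paper: pass to the affine cone, which is a submonoid of $\mathbb{A}^s$, view the monomials of a homogeneous $f\in I(\mathbb{Y})$ as $K$-characters of that monoid, and invoke Dedekind's independence theorem to force either vanishing coefficients or coincidences $\chi_{a_i}=\chi_{a_j}$, the latter producing binomials in $I(\mathbb{Y})$. The only (cosmetic) difference is organizational: you partition the monomials into character classes and apply Dedekind once to the reduced sum, whereas the paper peels off one binomial at a time and inducts on the number of terms.
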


\begin{proof} The set $\mathcal{S}=\{x\in\mathbb{A}^s\,\vert\, 
[x]\in\mathbb{Y}\cup\{[0]\}\}$ is a submonoid of $\mathbb{A}^s$. Take
a homogeneous polynomial $0\neq
f=\lambda_1t^{a_1}+\cdots+\lambda_mt^{a_m}$ that vanishes at all points
of $\mathbb{Y}$, where $\lambda_i\in K\setminus\{0\}$ for
all $i$ and $a_1,\ldots,a_m$ are distinct non-zero vectors in
$\mathbb{N}^s$. We set $a_i=(a_{i_1},\ldots,a_{is})$ for
all $i$. For each $i$ consider the $K$-character 
of $\mathcal{S}$ given by 
$$\chi_{i}\colon\mathcal{S}\rightarrow K,\ \ \ 
(\alpha_1,\ldots,\alpha_s)\mapsto\alpha_1^{a_{i1}}\cdots
\alpha_s^{a_{is}}.
$$ 

As $f\in I(\mathbb{Y})$, one has that
$\lambda_1\chi_1+\cdots+\lambda_m\chi_m=0$. Hence, by
Theorem~\ref{dedekind-independence}, we get that 
$m\geq 2$ and $\chi_i=\chi_j$ for
some $i\neq j$. Thus $t^{a_i}-t^{a_j}$ is in $I(\mathbb{Y})$. 
For simplicity of notation we assume that $i=1$
and $j=2$. Since $\mathbf[\mathbf{1}]\in\mathbb{Y}$, we get that
$\lambda_1+\cdots+\lambda_m=0$. Thus
$$
f=\lambda_2(t^{a_2}-t^{a_1})+\cdots+\lambda_m(t^{a_m}-t^{a_1}).
$$ 

Since $f-\lambda_2(t^{a_2}-t^{a_1})$ is a homogeneous polynomial in
$I(\mathbb{Y})$, by 
induction on $m$, we obtain that $f$ is a sum of homogeneous binomials in
$I(\mathbb{Y})$.
\end{proof}

This result can be restated as:

\begin{theorem} Let $\mathbb{Y}$ be a subset of $\mathbb{P}^{s-1}$ such
that $[\mathbf{1}]\in \mathbb{Y}$ and $[\alpha]\cdot[\beta]\in\mathbb{Y}$
for all $[\alpha]$, $[\beta]$ in $\mathbb{Y}$ with
$\alpha\cdot\beta\neq 0$. Then $I(\mathbb{Y})$ is a binomial ideal.  
\end{theorem}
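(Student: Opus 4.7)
The plan is to reduce this statement directly to Theorem~\ref{monoid-binomial} by showing that the hypotheses here are just a reformulation of the hypothesis that $\mathbb{Y}\cup\{[0]\}$ is a submonoid of $\mathbb{P}^{s-1}\cup\{[0]\}$ under componentwise multiplication.

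First I would observe that $[0]$ serves as an absorbing element in the monoid $(\mathbb{P}^{s-1}\cup\{[0]\},\,\cdot\,)$, since $[0]\cdot[\gamma]=[0]$ for every $[\gamma]$. Therefore to check that $\mathbb{Y}\cup\{[0]\}$ is a submonoid it suffices to verify (i) the identity $[\mathbf{1}]$ belongs to it, and (ii) for any two elements $[\alpha],[\beta]\in\mathbb{Y}\cup\{[0]\}$ the product $[\alpha]\cdot[\beta]$ also lies in $\mathbb{Y}\cup\{[0]\}$.

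Next I would verify (i) and (ii) from the given hypotheses. Condition (i) is immediate from $[\mathbf{1}]\in\mathbb{Y}$. For condition (ii), take $[\alpha],[\beta]\in\mathbb{Y}\cup\{[0]\}$. If either $[\alpha]=[0]$ or $[\beta]=[0]$, then $[\alpha]\cdot[\beta]=[0]\in\mathbb{Y}\cup\{[0]\}$. Otherwise $[\alpha],[\beta]\in\mathbb{Y}$, and I split into the case $\alpha\cdot\beta=0$, where $[\alpha]\cdot[\beta]=[0]$ belongs to $\mathbb{Y}\cup\{[0]\}$ trivially, and the case $\alpha\cdot\beta\neq 0$, where by hypothesis $[\alpha]\cdot[\beta]\in\mathbb{Y}$. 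In either case the product lies in $\mathbb{Y}\cup\{[0]\}$, so (ii) holds.

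Having established that $\mathbb{Y}\cup\{[0]\}$ is a submonoid of $\mathbb{P}^{s-1}\cup\{[0]\}$, I would invoke Theorem~\ref{monoid-binomial} to conclude that $I(\mathbb{Y})$ is a binomial ideal. There is no real obstacle here; the whole content of this restatement is the bookkeeping observation that the monoid condition on $\mathbb{Y}\cup\{[0]\}$ is equivalent to a closure condition on $\mathbb{Y}$ itself, with the only subtlety being the handling of products that collapse to $[0]$, which are automatically absorbed.
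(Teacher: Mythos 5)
Your proof is correct and matches the paper's intent exactly: the paper presents this theorem as a mere restatement of Theorem~\ref{monoid-binomial} (with no separate proof given), and your argument supplies precisely the bookkeeping translation between the closure condition on $\mathbb{Y}$ and the submonoid condition on $\mathbb{Y}\cup\{[0]\}$, including the correct handling of products that collapse to $[0]$.
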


%\begin{remark}\label{monoid-binomial-affine} If $Y$ is a submonoid of $\mathbb{A}^s$, then $I(Y)$ is
%a binomial ideal. This follows by adapting the proof of
%Theorem~\ref{monoid-binomial}.
%\end{remark}

The next result was observed in the Remark after \cite[Proposition~2.3]{EisStu}.

\begin{proposition}{\rm\cite{EisStu}}\label{monoid-binomial-affine} 
If $Y$ is a submonoid of $\mathbb{A}^s$ and
$\tau\in K^*$, then $I(Y)$ is a binomial ideal and $I(\tau Y)$ is a
non-pure binomial ideal.
 \end{proposition}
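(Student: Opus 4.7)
The plan is to handle the two claims separately: Dedekind's theorem for $I(Y)$, and a scaling automorphism to transfer that conclusion to $I(\tau Y)$.

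For $I(Y)$, I would simply repeat the proof of Theorem~\ref{monoid-binomial} with the homogeneity hypothesis removed. Take any nonzero $f=\lambda_1t^{a_1}+\cdots+\lambda_mt^{a_m}\in I(Y)$ with $\lambda_i\in K\setminus\{0\}$ and distinct $a_i\in\mathbb{N}^s$ (now possibly with some $a_i=0$, since we are no longer forcing $f$ to be homogeneous). The maps
$$
\chi_i\colon Y\to K,\quad \alpha\mapsto \alpha^{a_i},
$$
are $K$-characters of the monoid $Y$ because $Y$ contains the identity $\mathbf{1}=(1,\ldots,1)$. Since $\sum_i\lambda_i\chi_i$ vanishes on $Y$, Theorem~\ref{dedekind-independence} forces $m\geq 2$ and $\chi_i=\chi_j$ for some $i\neq j$, producing a binomial $t^{a_i}-t^{a_j}\in I(Y)$. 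Evaluating $f$ at $\mathbf{1}$ yields $\lambda_1+\cdots+\lambda_m=0$, and the same induction on $m$ used in Theorem~\ref{monoid-binomial} then writes $f$ as a sum of binomials in $I(Y)$.

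For $I(\tau Y)$, the key observation is that the $K$-algebra automorphism $\psi\colon S\to S$ defined by $\psi(t_i)=\tau t_i$ satisfies $\psi(g)(y)=g(\tau y)$ for all $g\in S$ and $y\in\mathbb{A}^s$. Hence $g\in I(\tau Y)\iff\psi(g)\in I(Y)$; that is, $I(\tau Y)=\psi^{-1}(I(Y))$. Taking a system of pure binomial generators $\{t^{a_k}-t^{b_k}\}_k$ of $I(Y)$ supplied by the first part, and applying $\psi^{-1}$ (which sends $t_i$ to $\tau^{-1}t_i$), we obtain the generators
$$
\psi^{-1}(t^{a_k}-t^{b_k})=\tau^{-|a_k|}t^{a_k}-\tau^{-|b_k|}t^{b_k}
$$
of $I(\tau Y)$, where $|a|$ abbreviates $a_1+\cdots+a_s$. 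These are (in general non-pure) binomials, as the two coefficients differ when $|a_k|\neq|b_k|$.

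The chief obstacle is that $\tau Y$ itself fails to be a submonoid of $\mathbb{A}^s$ for $\tau\neq 1$: componentwise multiplication on $\tau Y$ produces an extra factor of $\tau$, so the first part cannot be invoked on $\tau Y$ directly. The automorphism $\psi$ is the device that sidesteps this; once it is in place, the second assertion is just a routine transport of the first.
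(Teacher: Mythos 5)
Your proposal is correct and matches the paper's proof: the first claim is obtained by rerunning the Dedekind-character argument of Theorem~\ref{monoid-binomial} without homogeneity, and the second by passing to the generators $t^{b_i}/\tau^{|b_i|}-t^{c_i}/\tau^{|c_i|}$ of $I(\tau Y)$, exactly as in the paper. Your substitution automorphism $\psi$ merely makes explicit the verification the paper dismisses as ``not hard to see.''
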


\begin{proof} That $I(Y)$ is a binomial ideal follows readily by adapting the
proof of Theorem~\ref{monoid-binomial}. Let
$\{t^{b_i}-t^{c_i}\}_{i=1}^r$ be a set of generators of $I(Y)$ with
$b_i,c_i$ in $\mathbb{N}^s$ for all $i$. If
$a=(a_1,\ldots,a_s)\in\mathbb{N}^s$, we set $|a|=\sum_ia_i$. Then
it is not hard to see that the set
$\{t^{b_i}/\tau^{|b_i|}-t^{c_i}/\tau^{|c_i|}\}_{i=1}^r$ generates
$I(\tau Y)$, that is, $I(\tau Y)$ is a non-pure binomial ideal.
\end{proof}
%The following result complements 
%Theorems~\ref{puebla-cinvestav-2-rat}
%and \ref{puebla-cinvestav-2-rat-finite} in the case that $\mathbb{Y}$
%is parameterized by Laurent monomials.

\begin{theorem}\label{classification-binomial} Let $K$ be a field and
let $\mathbb{Y}$ be 
a subset of $\mathbb{P}^{s-1}$. Then $I(\mathbb{Y})$ is a
binomial ideal if and only if $V(I(\mathbb{Y}))\cup\{[0]\}$ is a 
monoid under componentwise multiplication. 
\end{theorem}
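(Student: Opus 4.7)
The plan is to prove the two implications separately. The backward direction ($\Leftarrow$) should be immediate from the machinery already in place: assuming $V(I(\mathbb{Y}))\cup\{[0]\}$ is a submonoid of $\mathbb{P}^{s-1}\cup\{[0]\}$, I would apply Theorem~\ref{monoid-binomial} to $\mathbb{Y}':=V(I(\mathbb{Y}))$ to conclude that $I(\mathbb{Y}')$ is binomial, and then invoke $I(V(I(\mathbb{Y})))=I(\mathbb{Y})$ from Lemma~\ref{jun3-14} to transfer this to $I(\mathbb{Y})$.

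For the forward direction ($\Rightarrow$), I would fix a generating set of $I(\mathbb{Y})$ consisting of binomials $f_j=t^{a_j}-t^{b_j}$. The first step is a reduction to homogeneous generators. Since each $f_j$ vanishes at the all-ones point $(1,\ldots,1)\in\mathbb{A}^s$, every element of $I(\mathbb{Y})$ does, so no nonzero monomial can lie in $I(\mathbb{Y})$. Because $I(\mathbb{Y})$ is a graded ideal, the two homogeneous components of any non-homogeneous $f_j$ (which are the monomials $t^{a_j}$ and $t^{b_j}$) would also lie in $I(\mathbb{Y})$; this forces $|a_j|=|b_j|$ for every $j$, and so the binomial generators may be taken homogeneous.

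Once the generators are homogeneous, the monoid structure falls out. The identity $[\mathbf{1}]$ lies in $V(I(\mathbb{Y}))$ because each homogeneous generator takes the value $1-1=0$ at $(1,\ldots,1)$. For closure, suppose $[\alpha],[\beta]\in V(I(\mathbb{Y}))$ with $\alpha\cdot\beta\neq 0$: the relations $\alpha^{a_j}=\alpha^{b_j}$ and $\beta^{a_j}=\beta^{b_j}$ multiply to give $(\alpha\cdot\beta)^{a_j}=(\alpha\cdot\beta)^{b_j}$, so every generator vanishes at $[\alpha\cdot\beta]$, and since the $f_j$ generate $I(\mathbb{Y})$ the same holds for every homogeneous polynomial in $I(\mathbb{Y})$. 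I expect the only real subtlety to be the reduction to homogeneous binomial generators; after that, everything follows the same pattern as Theorem~\ref{monoid-binomial}.
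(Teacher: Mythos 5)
Your proof is correct and follows essentially the same route as the paper: the backward implication via Theorem~\ref{monoid-binomial} together with $I(V(I(\mathbb{Y})))=I(\mathbb{Y})$, and the forward implication via the observation that homogeneous binomials vanish at $[\mathbf{1}]$ and at componentwise products of their zeros. Your explicit reduction showing the binomial generators must be homogeneous (because no monomial can lie in an ideal generated by binomials) is a careful justification of a step the paper dispatches with the terser assertion that a binomial in the graded ideal $I(\mathbb{Y})$ is homogeneous.
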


\begin{proof} $\Rightarrow$) Consider an arbitrary non-zero binomial
$f=t^a-t^b$ in $I(\mathbb{Y})$ with $a=(a_i)$ and
$b=(b_i)$ in $\mathbb{N}^s$. As $I(\mathbb{Y})$ is graded, $f$ is
homogeneous. First notice that $[\mathbf{1}]\in V(I(\mathbb{Y}))$
because $f$ vanishes at $[\mathbf{1}]$. Take $[\alpha]$, $[\beta]$ in
$V(I(\mathbb{Y}))$ with $ \alpha=(\alpha_i)$, $\beta=(\beta_i)$. Then
$$
\alpha_1^{a_1}\cdots\alpha_s^{a_s}=\alpha_1^{b_1}\cdots\alpha_s^{b_s}\mbox{
and
}\beta_1^{a_1}\cdots\beta_s^{a_s}=\beta_1^{b_1}\cdots\beta_s^{b_s},
$$
and consequently $(\alpha_1\beta_1)^{a_1}\cdots(\alpha_s\beta_s)^{a_s}=
(\alpha_1\beta_1)^{b_1}\cdots(\alpha_s\beta_s)^{b_s}$, i.e., $f$
vanishes at $[\alpha]\cdot[\beta]=[\alpha\cdot\beta]$ if $\alpha\cdot\beta\neq 0$.
Thus $[\alpha]\cdot[\beta]\in V(I(\mathbb{Y}))\cup\{[0]\}$.

$\Leftarrow$) Thanks to Theorem~\ref{monoid-binomial} one has that
$I(V(I(\mathbb{Y})))$ is a binomial ideal. 
Recall that $V(I(\mathbb{Y}))$ is equal to $\overline{\mathbb{Y}}$
(see Lemma~\ref{mar31-14}). On the other
hand, by Lemma~\ref{jun3-14},
$I(\mathbb{Y})=I(\overline{\mathbb{Y}})$. Thus $I(\mathbb{Y})$ is a
binomial ideal.
\end{proof}

\begin{remark}\label{classification-binomial-affine} If $Y\subset\mathbb{A}^s$, then $I(Y)$ is
a binomial ideal if and only if $V(I(Y))$ is a submonoid of
$\mathbb{A}^s$ under componentwise multiplication. This follows by adapting the proof of
Theorem~\ref{classification-binomial}.
\end{remark}

%hola

\begin{corollary}\label{oct11-14-1} If $\mathbb{Y}$ is a subset of
$\mathbb{P}^{s-1}$ which is closed in the Zariski topology, then
$I(\mathbb{Y})$ 
is a binomial ideal if and
only if $\mathbb{Y}\cup\{[0]\}$ is a submonoid of
$\mathbb{P}^{s-1}\cup\{[0]\}$.
\end{corollary}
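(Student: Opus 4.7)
The plan is to derive this corollary as an immediate specialization of Theorem \ref{classification-binomial}, using the fact that Zariski-closedness means $\mathbb{Y}$ coincides with $V(I(\mathbb{Y}))$.

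First I would invoke Theorem \ref{classification-binomial}, which tells us (for an arbitrary $\mathbb{Y} \subset \mathbb{P}^{s-1}$) that $I(\mathbb{Y})$ is a binomial ideal if and only if $V(I(\mathbb{Y})) \cup \{[0]\}$ is a submonoid of $\mathbb{P}^{s-1} \cup \{[0]\}$ under componentwise multiplication. The task then reduces to replacing $V(I(\mathbb{Y}))$ by $\mathbb{Y}$ on the right-hand side.

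Next I would identify the two sets using the closedness hypothesis. By Lemma \ref{mar31-14}(a) we have $V(I(\mathbb{Y})) = \overline{\mathbb{Y}}$, and the hypothesis that $\mathbb{Y}$ is closed in the Zariski topology gives $\overline{\mathbb{Y}} = \mathbb{Y}$. Combining these, $V(I(\mathbb{Y})) \cup \{[0]\} = \mathbb{Y} \cup \{[0]\}$, and the biconditional of Theorem \ref{classification-binomial} becomes exactly the biconditional in the statement.

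There is essentially no obstacle here: the content of the corollary is entirely absorbed into Theorem \ref{classification-binomial} together with the standard identification $V(I(\mathbb{Y})) = \overline{\mathbb{Y}}$. The only mild subtlety worth flagging in the write-up is that $[0]$ is added to both sides to accommodate the pathological case $\alpha \cdot \beta = 0$, which is already handled inside Theorem \ref{classification-binomial}, so nothing new needs to be checked about the monoid structure. The proof is therefore a one-line application of the theorem.
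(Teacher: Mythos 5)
Your proof is correct and follows exactly the paper's own argument: apply Theorem~\ref{classification-binomial} and use Lemma~\ref{mar31-14} together with the closedness hypothesis to identify $V(I(\mathbb{Y}))$ with $\mathbb{Y}$. Nothing is missing.
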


\begin{proof} Thanks to Theorem~\ref{classification-binomial} 
it suffices to recall that $V(I(\mathbb{Y}))$ is equal to
$\overline{\mathbb{Y}}$ (see Lemma~\ref{mar31-14}). 
\end{proof}

%\begin{proof} Thanks to Theorem~\ref{classification-binomial} 
%it suffices to observe that a finite set is closed in the
%Zariski topology and to recall that $V(I(\mathbb{Y}))$ is equal to
%$\overline{\mathbb{Y}}$ (see Lemma~\ref{mar31-14}). 
%\end{proof}

\begin{corollary}\label{oct11-14-2} If $\mathbb{Y}$ is a subset of
$\mathbb{P}^{s-1}$ and $\dim(S/I(\mathbb{Y}))=1$, then  $I(\mathbb{Y})$ is a binomial ideal if and
only if $\mathbb{Y}\cup\{[0]\}$ is a submonoid of
$\mathbb{P}^{s-1}\cup\{[0]\}$.
\end{corollary}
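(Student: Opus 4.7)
The plan is to deduce this corollary from Corollary~\ref{oct11-14-1} by showing that the dimension hypothesis forces $\mathbb{Y}$ to be Zariski closed. One direction is essentially automatic: if $\mathbb{Y}\cup\{[0]\}$ is a submonoid, then Theorem~\ref{monoid-binomial} already gives that $I(\mathbb{Y})$ is a binomial ideal, with no hypothesis on the dimension. So the content lies entirely in the forward direction.

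For the forward direction, I would first invoke Lemma~\ref{dim=1-finite} to conclude from $\dim(S/I(\mathbb{Y}))=1$ that $\mathbb{Y}$ is a finite set. The next step is to observe that every finite subset of $\mathbb{P}^{s-1}$ is closed in the Zariski topology. Concretely, for each point $[Q]\in\mathbb{Y}$ with coordinate $q_k\neq 0$, the ideal $I_{[Q]}$ of Lemma~\ref{primdec-ixx} has zero set equal to $\{[Q]\}$, since any $[P]\in V(I_{[Q]})$ must satisfy $q_kp_i=q_ip_k$ for every $i$ and is hence proportional to $Q$. Thus every singleton is closed, and $\mathbb{Y}$ is a finite union of closed sets, hence closed.

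Having established that $\mathbb{Y}$ is closed in $\mathbb{P}^{s-1}$, I would simply apply Corollary~\ref{oct11-14-1} to conclude that $I(\mathbb{Y})$ being binomial is equivalent to $\mathbb{Y}\cup\{[0]\}$ being a submonoid of $\mathbb{P}^{s-1}\cup\{[0]\}$ under componentwise multiplication.

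There is no real obstacle here; the statement is essentially a packaging of Corollary~\ref{oct11-14-1} with the finiteness assertion from Lemma~\ref{dim=1-finite}. The only point that requires a small argument is the (standard) fact that a single projective point is Zariski closed, but this falls out immediately from the explicit generators of $I_{[Q]}$ recorded in Lemma~\ref{primdec-ixx}.
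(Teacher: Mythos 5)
Your proposal is correct and follows exactly the paper's own route: Lemma~\ref{dim=1-finite} gives finiteness, finite sets are Zariski closed, and Corollary~\ref{oct11-14-1} finishes the argument. The extra detail you supply on why a singleton is closed (via the generators of $I_{[Q]}$ in Lemma~\ref{primdec-ixx}) is a harmless elaboration of a fact the paper simply asserts.
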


\begin{proof} This is a direct consequence of Lemma~\ref{dim=1-finite} and
Corollary~\ref{oct11-14-1} because any finite set is closed in the
Zariski topology.
\end{proof}

\begin{definition}\label{projectivetorus-def} The set 
$T=\{[(x_1,\ldots,x_s)]\in\mathbb{P}^{s-1}\vert\, x_i\in
K^*\mbox{ for all }i\}$ is called a {\it projective torus} 
in $\mathbb{P}^{s-1}$, and the set $T^*=(K^*)^s$ is called an 
{\it affine torus\/} in $\mathbb{A}^s$, where $K^*=K\setminus\{0\}$.
\end{definition}

If $Y$ is a submonoid of an affine torus $T^*$, then $I(Y)$ is a
non-graded lattice ideal (see \cite[Proposition~2.3]{EisStu}). The
following corollary is the graded version of this result.

\begin{corollary}\label{oct11-14} 
If $\mathbb{Y}$ is a submonoid of a projective torus $T$, 
then $I(\mathbb{Y})$ is a lattice ideal.
\end{corollary}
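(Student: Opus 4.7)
The plan is to verify the two defining properties of a lattice ideal: that $I(\mathbb{Y})$ is a binomial ideal, and that every variable $t_i$ is a non-zero-divisor on $S/I(\mathbb{Y})$. The first property will follow immediately from Theorem~\ref{monoid-binomial}, while the second will exploit the fact that every coordinate of every point of $\mathbb{Y}$ is nonzero.

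First I would observe that $\mathbb{Y}\cup\{[0]\}$ is a submonoid of $\mathbb{P}^{s-1}\cup\{[0]\}$. Indeed, since $\mathbb{Y}\subset T$, the product of any two elements of $\mathbb{Y}$ lies in $T$ and hence in $\mathbb{Y}$ (as $\mathbb{Y}$ is assumed to be a submonoid of $T$), while $[0]$ is absorbing for componentwise multiplication. Consequently Theorem~\ref{monoid-binomial} applies and tells us that $I(\mathbb{Y})$ is generated by homogeneous binomials.

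It remains to check that $t_i$ is a non-zero-divisor of $S/I(\mathbb{Y})$ for each $i$. Since $I(\mathbb{Y})$ is a graded ideal, it suffices to consider a homogeneous $f\in S$ with $t_if\in I(\mathbb{Y})$ and show that $f\in I(\mathbb{Y})$. For any $[\alpha]=[(\alpha_1,\ldots,\alpha_s)]\in\mathbb{Y}\subset T$ one has $\alpha_i\neq 0$, and the vanishing of $t_if$ at $[\alpha]$ gives $\alpha_i f(\alpha)=0$, so $f(\alpha)=0$. Hence $f\in I(\mathbb{Y})$, as desired. Combining this with the binomial property yields the conclusion.

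There is no real obstacle here; the only subtlety is remembering to use that $I(\mathbb{Y})$ is graded in order to reduce the zero-divisor test to homogeneous elements, and that being inside the torus $T$ is precisely what makes every coordinate nonzero so that we can cancel $\alpha_i$.
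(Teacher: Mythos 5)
Your proposal is correct and follows essentially the same route as the paper: apply Theorem~\ref{monoid-binomial} to get that $I(\mathbb{Y})$ is a binomial ideal, then verify that each $t_i$ is a non-zero-divisor on $S/I(\mathbb{Y})$ by evaluating at points of $\mathbb{Y}\subset T$, whose coordinates are all nonzero. Your version is slightly more careful than the paper's (which simply says ``if $t_if$ vanishes on $\mathbb{Y}$ then so does $f$''), since you explicitly reduce to homogeneous $f$ using that $I(\mathbb{Y})$ is graded.
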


\begin{proof} By Theorem~\ref{monoid-binomial}, $I(\mathbb{Y})$ is a
binomial ideal. Thus it suffices to show that $t_i$ is not a
zero-divisor of $S/I(\mathbb{Y})$ for all $i$. If $f\in S$ and $t_if$ 
vanishes at all points of $\mathbb{Y}$, then so does $f$, as 
required.
\end{proof}

\begin{corollary}{\cite[Proposition~6.7(a)]{lattice-dim1} }\label{lattice-classify} 
If $\mathbb{Y}\subset\mathbb{P}^{s-1}$ and $\dim(S/I(\mathbb{Y}))=1$,
then the following are equivalent{\rm:}
\begin{itemize}
\item[\rm(a)] $I(\mathbb{Y})$ is a lattice ideal.
\item[\rm(b)] $\mathbb{Y}$ is a finite subgroup of a projective torus
$T$.
\end{itemize}
\end{corollary}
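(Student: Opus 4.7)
The plan is to use that the hypothesis $\dim(S/I(\mathbb{Y}))=1$ forces $\mathbb{Y}$ to be finite by Lemma~\ref{dim=1-finite}, so that the primary decomposition $I(\mathbb{Y})=\bigcap_{[Q]\in\mathbb{Y}}I_{[Q]}$ supplied by Lemma~\ref{primdec-ixx} is available throughout the proof. The implication (b)$\Rightarrow$(a) is then immediate: a subgroup of $T$ is in particular a submonoid of $T$, so Corollary~\ref{oct11-14} yields that $I(\mathbb{Y})$ is a lattice ideal.

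For (a)$\Rightarrow$(b) I would split the argument into two pieces: first obtaining that $\mathbb{Y}\cup\{[0]\}$ is a submonoid of $\mathbb{P}^{s-1}\cup\{[0]\}$, and then upgrading this to $\mathbb{Y}\subset T$. Since a lattice ideal is in particular a binomial ideal, Corollary~\ref{oct11-14-2} delivers the submonoid property at once. For the second step I would exploit that, in the irredundant primary decomposition into the distinct primes $I_{[Q]}$, each $I_{[Q]}$ is an associated prime of $I(\mathbb{Y})$; the lattice ideal hypothesis that every $t_i$ is a non-zero-divisor on $S/I(\mathbb{Y})$ therefore requires $t_i\notin I_{[Q]}$ for all $i$ and all $[Q]\in\mathbb{Y}$. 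If some coordinate $Q_i$ of $[Q]$ were zero, then choosing $k\neq i$ with $Q_k\neq 0$, the explicit generator $Q_kt_i-Q_it_k=Q_kt_i$ of $I_{[Q]}$ furnished by Lemma~\ref{primdec-ixx} would force $t_i\in I_{[Q]}$, a contradiction. Hence every coordinate of every point of $\mathbb{Y}$ is nonzero, i.e.\ $\mathbb{Y}\subset T$.

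It only remains to observe that the projective torus $T$ is a group under componentwise multiplication and that any finite submonoid of a group is automatically a subgroup (for each $x$ in the submonoid the powers $x,x^2,\ldots$ must eventually repeat, yielding $x^n=[\mathbf{1}]$ for some $n$). Combined with the previous paragraph this gives that $\mathbb{Y}$ is a finite subgroup of $T$, as required. The main obstacle is the intermediate step $\mathbb{Y}\subset T$: one has to convert the purely algebraic non-zero-divisor condition of a lattice ideal into the geometric statement that no coordinate of any point of $\mathbb{Y}$ vanishes, and this is exactly where the primary decomposition and the explicit generators of $I_{[Q]}$ from Lemma~\ref{primdec-ixx} are indispensable.
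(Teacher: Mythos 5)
Your proposal is correct and follows essentially the same route as the paper: finiteness via Lemma~\ref{dim=1-finite}, the submonoid property via Corollary~\ref{oct11-14-2}, the containment $\mathbb{Y}\subset T$ via the non-zero-divisor condition applied to the primary decomposition of Lemma~\ref{primdec-ixx}, the group property from finiteness and cancellation in $T$, and Corollary~\ref{oct11-14} for the converse. You merely spell out explicitly the step the paper leaves implicit (how the lattice-ideal condition forces every coordinate of every point to be nonzero), and that elaboration is accurate.
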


\begin{proof} (a) $\Rightarrow$ (b): By  Lemma~\ref{dim=1-finite} the
set $\mathbb{Y}$ is finite. Using Corollary~\ref{oct11-14-2} and
Lemma~\ref{primdec-ixx} it 
follows that $\mathbb{Y}$ is a submonoid of $T$. As the cancellation laws hold in $T$ and
$\mathbb{Y}$ is finite, we get that $\mathbb{Y}$ is a group.

(b) $\Rightarrow$ (a): This is a direct consequence of
Corollary~\ref{oct11-14}. 
\end{proof}

\begin{proposition}\label{lattice-ideal-param-h} 
Let $K$ be an algebraically closed field. 
If $\mathbb{Y}\subset\mathbb{P}^{s-1}$, 
then the following are equivalent{\rm:}
\begin{itemize}
\item[\rm(a)] $\mathbb{Y}$ is a finite subgroup of a projective torus
$T$.
\item[\rm(b)] There is a finite subgroup $H$ of $K^*$ and 
$v_1,\ldots,v_s\in\mathbb{Z}^n$ such that 
$$\mathbb{Y}=\{[(x^{v_1},\ldots,x^{v_s})]\, \vert\, 
x=(x_1,\ldots,x_n)\mbox{ and }x_i\in
H\mbox{ for all }i\}\subset\mathbb{P}^{s-1}.
$$
\end{itemize}
\end{proposition}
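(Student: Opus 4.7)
The plan is to handle the two implications separately; (b)$\Rightarrow$(a) is routine, while (a)$\Rightarrow$(b) is where algebraic closedness of $K$ enters essentially.

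For (b)$\Rightarrow$(a), I would observe that the map $\Phi\colon H^n\to T$, $x\mapsto[(x^{v_1},\ldots,x^{v_s})]$, is a group homomorphism: the identity $\mathbf{1}\in H^n$ goes to $[\mathbf{1}]\in T$, and $(xy)^{v_i}=x^{v_i}y^{v_i}$ gives multiplicativity. Hence its image $\mathbb{Y}$ is a subgroup of $T$, and finiteness is immediate from $|H^n|<\infty$.

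For (a)$\Rightarrow$(b), my first step is to lift $\mathbb{Y}$ to a finite subgroup $G$ of the affine torus $(K^*)^s$ surjecting onto $\mathbb{Y}$ under the projection $\pi\colon(K^*)^s\to T$. Let $N$ be the exponent of the finite group $\mathbb{Y}$. For each $[\gamma]\in\mathbb{Y}$ and any lift $\gamma\in(K^*)^s$, one has $\gamma^N=c\,\mathbf{1}$ for some $c\in K^*$; taking an $N$-th root $c'$ of $c$ in $K$ (this is where algebraic closedness is used) and replacing $\gamma$ by $\gamma/c'$, I obtain a lift in $\mu_N^s$, where $\mu_N\subset K^*$ denotes the group of $N$-th roots of unity. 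Then $G:=\pi^{-1}(\mathbb{Y})\cap\mu_N^s$ is a finite subgroup of $(K^*)^s$ that maps onto $\mathbb{Y}$.

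The second step is to write down an explicit parameterization of $G$. Since $G$ is a finite abelian group, I would pick generators $\omega^{(1)},\ldots,\omega^{(n)}$ of $G$. Because $\mu_N$ is cyclic (finite subgroups of $K^*$ are cyclic), I fix a generator $\zeta$ and write each coordinate as $\omega^{(j)}_i=\zeta^{a_{ij}}$ for integers $a_{ij}$; then I set $H:=\mu_N$ and $v_i:=(a_{i1},\ldots,a_{in})\in\mathbb{Z}^n$. A short verification shows that for $x=(\zeta^{k_1},\ldots,\zeta^{k_n})\in H^n$,
$$
(x^{v_1},\ldots,x^{v_s})=(\omega^{(1)})^{k_1}\cdots(\omega^{(n)})^{k_n},
$$
so the image of $H^n$ under $x\mapsto(x^{v_1},\ldots,x^{v_s})$ is exactly $G$. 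Projecting to $\mathbb{P}^{s-1}$ yields $\mathbb{Y}$, as required.

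The main obstacle is the lifting step: over a non-algebraically-closed field, the scalar $c\in K^*$ arising from $\gamma^N=c\,\mathbf{1}$ need not admit an $N$-th root in $K$, so a finite subgroup of $T$ may fail to lift to a finite subgroup of $(K^*)^s$ at all, and the cyclic-root parameterization becomes unavailable. This is the only place where the hypothesis on $K$ enters; the remainder of the argument is bookkeeping based on the structure theorem for finite abelian groups and the cyclicity of finite subgroups of $K^*$.
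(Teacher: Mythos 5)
Your proposal is correct and follows essentially the same route as the paper: algebraic closedness is used exactly where the paper uses it (extracting a root of the scalar $c$ with $\gamma^{N}=c\,\mathbf{1}$ so that representatives can be normalized to lie in a group of roots of unity), and the exponent vectors $v_i$ are then read off from generators of a finite abelian group via a generator of the cyclic group $H\subset K^*$. The only difference is organizational --- you lift all of $\mathbb{Y}$ to a finite subgroup $G\subset\mu_N^s$ before choosing generators, whereas the paper normalizes generators $[\alpha_i]$ of $\mathbb{Y}$ one at a time using their individual orders $m_i$ --- and this does not change the substance of the argument.
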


\begin{proof} 

(b) $\Rightarrow$ (a): It is not hard to verify that $\mathbb{Y}$ is
a subgroup of $T$ using the parameterization of $\mathbb{Y}$ relative
to $H$. 

(a) $\Rightarrow$ (b): By the fundamental theorem of finitely generated
abelian groups, $\mathbb{Y}$ is a direct product of cyclic groups. Hence,
there are $[\alpha_1],\ldots,[\alpha_n]$ in $\mathbb{Y}$ such that
$$
\mathbb{Y}=\left\{\left.[\alpha_1]^{i_1}\cdots[\alpha_n]^{i_n}\, \right\vert\,
i_1,\ldots,i_n\in\mathbb{Z}\right\}.
$$
We set $\alpha_i=(\alpha_{i1},\ldots,\alpha_{is})$ for $i=1,\ldots,n$.
As $[\alpha_1],\ldots,[\alpha_n]$ have finite order, for each $1\leq
i\leq n$ there is $m_i={\rm o}([\alpha_i])$ such that
$[\alpha_i]^{m_i}=[\mathbf{1}]$. Thus 
$$
(\alpha_{i1}^{m_i},\ldots,\alpha_{is}^{m_i})=(\lambda_i,\ldots,\lambda_i)
$$
for some $\lambda_i\in K^*$. Pick $\mu_i\in K^*$ such that
$\mu_i^{m_i}=\lambda_i$. Setting, $\beta_{ij}=\alpha_{ij}/\mu_i$, one
has $\beta_{ij}^{m_i}=1$ for all $i,j$, that is all $\beta_{ij}$'s
are in $K^*$ and have finite order. Consider the subgroup $H$ of $K^*$
generated by all $\beta_{ij}$'s. This group is cyclic because $K$ is
a field. If $\beta$ is a generator of $(H,\cdot\, )$, we can write
$\alpha_{ij}/\mu_i=\beta^{v_{ji}}$ for some  $v_{ji}$ in $\mathbb{N}$.
Hence 
$$
[\alpha_1]=[(\beta^{v_{11}},\ldots,\beta^{v_{s1}})],\ldots,
[\alpha_n]=[(\beta^{v_{1n}},\ldots,\beta^{v_{sn}})].
$$ 

We set $v_i=(v_{i1},\ldots,v_{in})$ for $i=1,\ldots,s$. 
Let $\mathbb{Y}_H$ be the set in $\mathbb{P}^{s-1}$ parameterized by
the monomials $y^{v_1},\ldots,y^{v_s}$ relative to $H$. If
$[\gamma]\in\mathbb{Y}$, then we can write
$$
[\gamma]=[\alpha_1]^{i_1}\cdots[\alpha_n]^{i_n}=[(
(\beta^{i_1})^{v_{11}}\cdots (\beta^{i_n})^{v_{1n}},\ldots,
(\beta^{i_1})^{v_{s1}}\cdots (\beta^{i_n})^{v_{sn}}
)]
$$
for some $i_1,\ldots,i_n\in\mathbb{Z}$. Thus
$[\gamma]\in\mathbb{Y}_H$. Conversely if $[\gamma]\in\mathbb{Y}_H$,
then $[\gamma]=[(x^{v_1},\ldots,x^{v_s})]$ for some $x_1,\ldots,x_n$
in $H$. Since any $x_k$ is of the form $\beta^{i_k}$ for some integer
$i_k$, one can write $[\gamma]=[\alpha_1]^{i_1}\cdots[\alpha_n]^{i_n}$, that
is, $[\gamma]\in\mathbb{Y}$. 
\end{proof}

\begin{remark} The equivalence between
(a) and (b) was shown in \cite[Proposition~6.7(b)]{lattice-dim1}
under the assumption that $K$ is a finite field.
\end{remark}

%Undelete this is a nice exxample:
%If $I$ is a binomial ideal of $S$, then 
%its saturation $(I\colon(t_1\cdots t_s)^\infty)$ is binomial ideal. The converse is
%not true in general as the next example shows.

%\begin{example} Let $K$ be any field and  
%let $\mathbb{Y}=\{[(1,1,1)],[(1,1,0)],[(1,0,1)]\}$. By
%Corollary~\ref{oct11-14-1} the
%vanishing ideal $I(\mathbb{Y})$ is not a binomial ideal because
%$\mathbb{Y}\cup\{[0]\}$ is not a monoid. The vanishing ideal $I(Y)$ of
%$Y=\mathbb{Y}\cap T=\{[(1,1,1)]\}$ is a lattice ideal and 
%$(I(\mathbb{Y})\colon(t_1\cdots t_s)^\infty)=I(Y)$. 
%\end{example}

\begin{proposition}\label{lattice-alg-closed-char=0} Let $K$ be an algebraically closed field of
characteristic zero and let $I$ be a graded ideal of $S$ of 
dimension $1$. Then $I$ is a lattice ideal if and only if $I$ is the
vanishing ideal of a finite subgroup $\mathbb{Y}$ of a projective
torus $T$.
\end{proposition}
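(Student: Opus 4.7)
The plan is to prove the two implications separately. The easier direction ($\Leftarrow$) goes as follows: if $\mathbb{Y}$ is a finite subgroup of a projective torus $T\subset\mathbb{P}^{s-1}$, then $[\mathbf{1}]\in\mathbb{Y}$, so $\mathbb{Y}$ is nonempty, and Lemma~\ref{primdec-ixx} expresses $I(\mathbb{Y})=\bigcap_{[P]\in\mathbb{Y}}I_{[P]}$ as a finite intersection of primes of height $s-1$; hence $\dim(S/I(\mathbb{Y}))=1$, and Corollary~\ref{oct11-14} (equivalently, the (b)$\Rightarrow$(a) half of Corollary~\ref{lattice-classify}) gives that $I(\mathbb{Y})$ is a lattice ideal.

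For the substantive direction ($\Rightarrow$), let $I$ be a graded lattice ideal of dimension $1$. The crux---and the main obstacle---is to show that $I$ is a \emph{radical} ideal; this is where the characteristic zero hypothesis enters in an essential way. Write $I=I_L$ for a sublattice $L\subset\mathbb{Z}^s$, as afforded by the characterization of lattice ideals in \cite{EisStu}. The primary decomposition theory of lattice ideals developed there expresses $I_L$ as an intersection of toric prime ideals $I_{L,\chi}$, one for each $K^*$-valued character $\chi$ of the finite abelian group $\mathrm{sat}(L)/L$. Because $K$ is algebraically closed of characteristic zero, the order of $\mathrm{sat}(L)/L$ is invertible in $K$ and $K^*$ contains all the requisite roots of unity, so every character is realized and the decomposition becomes a genuine intersection of distinct primes; in particular $I$ is radical.

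With radicality in hand, the remainder is immediate from results already in the paper. By the projective Nullstellensatz (valid since $K$ is algebraically closed), $I=I(V(I))$; setting $\mathbb{Y}:=V(I)$, Lemma~\ref{dim=1-finite} ensures $\mathbb{Y}$ is finite; and then the (a)$\Rightarrow$(b) half of Corollary~\ref{lattice-classify}, applied to $\mathbb{Y}$ (which satisfies $\dim(S/I(\mathbb{Y}))=\dim(S/I)=1$ and $I(\mathbb{Y})=I$ is a lattice ideal), yields that $\mathbb{Y}$ is a finite subgroup of a projective torus $T$, as desired. I expect everything outside the radicality step to be essentially bookkeeping, whereas the radicality of $I_L$ in characteristic zero is the single place where the assumption on $K$ is genuinely used.
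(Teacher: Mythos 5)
Your proof is correct, and it shares the paper's overall architecture --- the whole weight of the characteristic-zero hypothesis is placed on the radicality of a graded lattice ideal, after which one identifies $I$ with the vanishing ideal of a finite set of torus points and upgrades ``finite submonoid'' to ``finite subgroup'' by cancellation. The implementation of the forward direction differs, though. You obtain radicality from the Eisenbud--Sturmfels character decomposition of $I_L$ over the characters of $\mathrm{sat}(L)/L$, then invoke the projective Nullstellensatz to get $I=I(V(I))$, note $V(I)$ is finite by Lemma~\ref{dim=1-finite}, and finish by citing Corollary~\ref{lattice-classify}(a)$\Rightarrow$(b) as a black box. The paper instead quotes radicality directly from \cite[Theorem~8.2.27]{monalg-rev} and then argues more concretely: since $I$ is graded of dimension $1$ and $t_1$ is a nonzerodivisor, each $t_i^{a_i}-t_1^{a_i}$ lies in $I$ for suitable $a_i$, these split into linear factors $t_i-\mu t_1$ over $K$, so each minimal prime of the radical ideal $I$ is generated by $s-1$ such linear forms and is the vanishing ideal of a single torus point; this exhibits $I=I(\mathbb{Y})$ for a finite $\mathbb{Y}\subset T$ without an explicit appeal to the Nullstellensatz, after which Corollary~\ref{oct11-14-1} and cancellation in $T$ give the group structure. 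Your route is slightly more modular (it reuses Corollary~\ref{lattice-classify} wholesale and makes the role of the Nullstellensatz explicit), while the paper's is more self-contained and makes visible \emph{why} the zero set lands inside the torus. Both are valid; the reverse implications are identical.
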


\begin{proof} 
$\Rightarrow$) Assume that $I=I(\mathcal{L})$ is the lattice ideal of
a lattice $\mathcal{L}$ in $\mathbb{Z}^s$. Since $I$ is graded and
$\dim(S/I)=1$, for
each $i\geq 2$, there is $a_i\in\mathbb{N}_+$ such that 
$f_i:=t_i^{a_i}-t_1^{a_i}\in I$. This polynomial has a factorization
into linear factors of the form $t_i-\mu t_1$ with $\mu\in K^*$. In
characteristic zero a lattice ideal is radical \cite[Theorem~8.2.27]{monalg-rev}. Therefore $I$ is the
intersection of its minimal primes and each minimal prime is generated
by $s-1$ linear polynomials of the form $t_i-\mu t_1$. It follows 
that $I$ is the vanishing ideal of some finite subset $\mathbb{Y}$ of a
projective torus $T$. By Corollary~\ref{oct11-14-1}, $\mathbb{Y}$ is a
submonoid of $T$. As the cancellation laws hold in $T$ and
$\mathbb{Y}$ is finite, we get that $\mathbb{Y}$ is a group.

$\Leftarrow$) This implication follows at once from Corollary~\ref{oct11-14}. 
\end{proof}

\medskip

\noindent
{\bf Acknowledgments.} 
We thank Thomas Kahle for his comments and for pointing out the Remark 
after \cite[Proposition~2.3]{EisStu}.  The authors would also like to thank the referees for their
careful reading of the paper and for the improvements that
they suggested.

\bibliographystyle{plain}

\end{document}